\tikzset{
    middlearrow/.style n args={3}{
        draw,
        decoration={
            markings,
            mark=at position 0.5 with {
                \arrow[scale=2]{#1};
                \path[#2] node {$#3$};
            },
        },
        postaction=decorate
    }
}
\newtheorem{thm}{Theorem}[section]
\newtheorem{lem}[thm]{Lemma}
\newtheorem{prop}[thm]{Proposition}
\newtheorem{cor}[thm]{Corollary}
\theoremstyle{definition}
\newtheorem{defn}[thm]{Definition}
\newtheorem{ex}[thm]{Example}
\theoremstyle{remark}
\newtheorem{rem}[thm]{Remark}
\newcommand{\note}[1]{}
\newcommand{\HC}[2]{[#1\to#2]}
\newcommand{\Cyc}{\textsf{\textbf{C}}}
\newcommand{\Net}{\textsf{\textbf{N}}}
\newcommand{\Wu}{W^\textrm{u}}
\newcommand{\Ws}{W^\textrm{s}}
\newcommand{\Shft}{\Sigma}
\newcommand{\dd}{\textnormal{d}}
\newcommand{\diag}{\textnormal{diag}}
\definecolor{colCB}{rgb}{0,0,.7}
\definecolor{colAL}{rgb}{.7,0,0}
\newcommand{\F}{\mathcal{F}}
\newcommand{\R}{\mathbb{R}}
\newcommand{\Rn}{\R^d}
\newcommand{\N}{\mathbb{N}}
\newcommand{\norm}[1]{\left\|#1\right\|}
\newcommand{\rset}[2]{\left\lbrace\, #1\,\left|\;#2\right.\right\rbrace}
\newcommand{\set}[2]{\rset{#1}{#2}}
\newcommand{\tset}[2]{\big\lbrace #1\,\big|\;#2\big\rbrace}
\newcommand{\sset}[1]{\left\lbrace #1\right\rbrace}
\begin{document}


\title[Towards heteroclinic networks in aperiodic ways]{How many points converge to a heteroclinic network in an aperiodic way?}
\author[Christian Bick and Alexander Lohse]{Christian Bick${}^\blacktriangledown$ and Alexander Lohse${}^\blacktriangle$}%

\address{${}^\blacktriangledown$Department of Mathematics, Vrije Universiteit Amsterdam, DE Boelelaan 1111, Amsterdam, The Netherlands;
Institute for Advanced Study, Technical University of Munich, Lichtenbergstr 2, 85748 Garching, Germany; 
Department of Mathematics, University of Exeter, Exeter EX4 4QF, United Kingdom; 
and Mathematical Institute,
University of Oxford, Oxford OX2 6GG, United Kingdom}
\address{${}^\blacktriangle$University of Hamburg, Department of Mathematics, Bundesstra{\ss}e 55, 20146 Hamburg, Germany}
\date{\today}

\maketitle

\begin{abstract}
Homoclinic and heteroclinic connections can form cycles and networks in phase space, which organize the global phenomena in dynamical systems.
On the one hand, stability notions for (omni)cycles give insight into how many initial conditions approach the network along a single given (omni)cycle.
On the other hand, the term \emph{switching} is used to describe situations where there are trajectories that follow any possible sequence of heteroclinic connections along the network.
Here we give a notion of asymptotic stability for general sequences along a network of homoclinic and heteroclinic connections.
We show that there cannot be uncountably many aperiodic sequences that attract a set with nontrivial measure.
Finally, we discuss examples where one may or may not expect aperiodic convergence towards a network and conclude with some open questions.
\end{abstract}

\newcommand{\HCi}[2]{\HC{\xi_{#1}}{\xi_{#2}}}

\section{Introduction}
\noindent
Homoclinic and heteroclinic structures are prototypical examples of invariant sets in phase space that describe global dynamical phenomena.
They are not only of intrinsic mathematical interest~\cite{Ashwin2017} but have also been employed to model and understand metastable dynamics in applications, including neuroscience~\cite{Rabinovich2001,Rabinovich2006,Rabinovich2012,Creaser2019}, coupled oscillators~\cite{Bick2018}, game theory~\cite{Postlethwaite2022,Castro2021}, and computation~\cite{SchittlerNeves2012}.
Typically, invariant sets are physically most relevant if they attract a set of initial conditions that is nonvanishing with respect to the Lebesgue measure---in this case the invariant set has been called \emph{fragmentarily asymptotically stable (f.a.s.)}~\cite{Podvigina2012}.

To be more specific, consider a dynamical system on~$\Rn$ defined by a differential equation $\dot x = f(x)$, $x\in\Rn$, which induces a sufficiently smooth flow~$\phi_t$. 
For a hyperbolic equilibrium~$\xi$ let~$\Wu(\xi)$, $\Ws(\xi)$ denote the usual unstable/stable invariant manifolds, respectively.
A \emph{cycle~$\Cyc$ in phase space} of length~$L$ consists of finitely many distinct hyperbolic equilibria~$\xi_{1}, \dotsc, \xi_{L}\in\R^d$ and connecting trajectories $\gamma_{l}\subset \Wu(\xi_{l})\cap \Ws(\xi_{l+1})$, where indices~$\l$ are taken modulo~$L$. 
A cycle is \emph{homoclinic} if $L=1$ and \emph{heteroclinic} if $L>1$. 
We identify a cycle~$\Cyc$ with its invariant set, that is, $\Cyc = \bigcup_{l=1}^L (\sset{\xi_{l}}\cup\gamma_{l})$.


Testable conditions for a cycle to be f.a.s.\ can be found under reasonable assumptions on the return maps which are used to approximate the dynamics of trajectories near the cycle: Introducing logarithmic coordinates to write the local dynamics near equilibria in terms of \emph{transition matrices}~\cite{Field1991,Krupa2004}, properties of the eigenvectors and eigenvalues of finite products of these transition matrices determine whether a cycle is f.a.s.~\cite{Podvigina2012,Garrido-da-Silva2016}.

Multiple cycles can make up larger structures in phase space:
A \emph{network~$\Net$ in phase space} is the connected finite union of two or more distinct cycles\footnote{Note that we do not preclude the existence of infinitely many connecting trajectories between equilibria~$\xi_p$ and $\xi_{p'}$, for example, if $\dim(\Wu(\xi_{p})\cap \Ws(\xi_{p'}))>1$. We rather restrict attention to a finite collection of connecting trajectories.}, possibly of different lengths. 
Its nodes are hyperbolic equilibria~$\xi_p$, $p\in\sset{1, \dotsc, P}$ connected by trajectories~$\gamma_q$, $q\in\sset{1, \dotsc, Q}$.
In particular, if $\Xi = \sset{\xi_p}$ is the set of nodes and $\alpha, \omega$ denote the usual limit sets, we have for any $x\in\gamma_q$ that $\alpha(x), \omega(x)\in\Xi$.
The network as an invariant set, that is, $\Net = \bigcup_{p=1}^{P}\sset{\xi_p}\cup\bigcup_{q=1}^{Q}\gamma_q $, may also be~f.a.s.

The main question we address here is \emph{how} trajectories approach a network~$\Net$ that is f.a.s. 
In particular, we are interested in how many initial conditions approach~$\Net$ following a given, potentially aperiodic, sequence of connections.
These questions are motivated from two perspectives.

First, one may consider the stability of constituent cycles.
For example, Postlethwaite and Dawes~\cite{Postlethwaite_2005} observed ``irregular cycling'' dynamics close to a heteroclinic network and obtained restrictions for periodic cycling.
More recently, Podvigina~\cite{Podvigina2021} extended the notion of stability of cycles to \emph{omnicycles}---periodic concatenations of cycles---and gave criteria that make it possible to compute the stability properties explicitly in the same way as for cycles.
In other words, omnicycles can be seen as periodic sequences of transitions along connections in a network, which yields a notion of stability for such periodic sequences (beyond any associated invariant sets).

Second, there is the concept of \emph{switching} near a homo-/heteroclinic network~\cite{Aguiar2005,doi:10.1080/14689361003769770,Rodrigues2015a}. 
Roughly speaking, switching means that for any arbitrarily small neighborhood of the network and for any given sequence to follow the connections in the network, there is a trajectory within this neighborhood that does exactly that.
In some cases it is possible to prove the existence of switching by showing that there is an invariant set where the dynamics are conjugated to a full shift on a set of symbols suitably chosen to represent all possible paths around the network~\cite{Aguiar2005}.
In such examples any sequence of network connections may be followed, but the trajectories doing so are not necessarily attracted to the network.

Consequently, there is a gap between stability of (omni)cycles and switching.
On the one hand, numerical simulations indicate that there may be trajectories that approach a network in an aperiodic way~\cite{Postlethwaite_2005,Postlethwaite2022, Podvigina2021}---but it is not clear whether for any aperiodic path there are trajectories following it and, if so, how many initial conditions follow any given path.
On the other hand, there are well-studied examples where there are obstructions to switching, such as the Kirk--Silber network~\cite{VKirk_1994} and more general networks~\cite{Castro_2023}: 
There are parameter values for which trajectories may wind around one cycle arbitrarily often but as soon as they transition to another cycle, they will stay there indefinitely.

In this note we introduce and discuss stability properties of arbitrary sequences of transitions between equilibria in a network~$\Net$ with a particular focus on aperiodic sequences.
Section~\ref{sec:Sequences} introduces basic terminology on networks and sequences as well as the concept of switching.
In Section~\ref{sec:Stability}, we then define a notion of fragmentary asymptotic stability for sequences of connections in a network, whether periodic or aperiodic. 
We show that our definition is compatible with the existing notions of fragmentary asymptotic stability for invariant sets.
In Section~\ref{sec:Aperiodicity} we discuss some consequences for aperiodic sequences. 
In particular, we show that if there is one f.a.s.\ aperiodic sequence, then there are countably infinitely many f.a.s.\ aperiodic sequences (Proposition~\ref{prop:CountablyMany}), but there cannot be uncountably many f.a.s.\ aperiodic sequences (Proposition~\ref{prop:NotTooMany}).
Finally, in Section~\ref{sec:Examples} we discuss concrete examples where aperiodicity does and does not play a role. We finish with a discussion and some open questions in Section~\ref{sec:discussion}.

\section{Networks, sequences, and switching}
\label{sec:Sequences}

\newcommand{\qb}{\mathbf{q}}
\newcommand{\pb}{\mathbf{p}}

\subsection{Sequences on a network}

Consider a network~$\Net \subset \Rn$ in phase space with $P$~equilibria~$\Xi = \sset{\xi_p}$ and $Q$~heteroclinic~connections~$\gamma_q$. 
For any~$\gamma_q$ and $x\in\gamma_q$ write $\alpha[\gamma_q] := \alpha(x)$, $\omega[\gamma_q] := \omega(x)$, so that $\gamma_q$ is a connection between the equilibria $\alpha[\gamma_q]$ and $\omega[\gamma_q]$. Itineraries along the network~$\Net$ can be encoded by sequences of transitions:

\begin{defn}\label{def:TransitionSeq}
An (infinite) sequence $\qb = q_1q_2q_3\ldots$ with $q_k\in\sset{1,\dotsc,Q}$ is a \emph{sequence on~$\Net$} if $\omega[\gamma_{q_k}] = \alpha[\gamma_{q_{k+1}}]$ for all $k\in\N$.
\end{defn}

\begin{rem}
In some cases itineracy in terms of \emph{sequences of transitions~$\qb$} as in Definition~\ref{def:TransitionSeq} can be identified with \emph{sequences of equilibria}:
Suppose that for each pair~$(\xi_p, \xi_{p'})$ there is at most one connecting trajectory~$\gamma_q$ in~$\Net$ with $\alpha[\gamma_q] = \xi_p$ and $\alpha[\gamma_q] = \xi_{p'}$.
Then one may identify the sequence $\qb = q_1q_2\ldots$ of connections with a sequence of equilibria $\pb=p_1p_2\ldots\in\sset{1,\dotsc,P}^\N$ of equilibria such that $\xi_{p_k}=\alpha[\gamma_{q_k}]$, $\xi_{p_{k+1}} = \omega[\gamma_{q_k}]$.
\end{rem}

Each sequence defines an associated invariant set that is a subset of the network. For a sequence~$\qb$ define
\begin{equation}\label{eq:Gam}
\Gamma(\qb) := \bigcup_{j\in\N} \left(\sset{\alpha[\gamma_{q_j}]}\cup\gamma_{q_j}\cup\sset{\omega[\gamma_{q_j}]}\right)\subset\Net,
\end{equation}
an invariant set containing all connections in~$\qb$ as well as the equilibria that are their endpoints. 

\begin{defn}
A heteroclinic sequence $\qb = q_1q_2q_3\ldots$ is \emph{preperiodic} with period $K>0$ if there is a $K_0 \geq 0$ such that $q_{k+K}=q_k$ for all $k>K_0$. It is \emph{periodic} if it is preperiodic with $K_0=0$. The sequence is \emph{aperiodic} if it is not preperiodic.
\end{defn}

A (homoclinic or heteroclinic) cycle as an invariant set~$\Cyc$ can be associated with a periodic sequence~$\qb$. 
In this case $\Gamma(\qb)=\Cyc$ is topologically a circle, because our definition of a cycle restricts attention to a single connection between a given pair of equilibria. 
Similarly, omnicycles are associated with periodic sequences. 
Conversely, for any periodic sequence~$\qb$ the invariant set~$\Gamma(\qb)$ is, in general, topologically a union of circles.

Moving along a sequence now corresponds to a shift operation on a sequence of~$Q$ symbols for each of the connections in the network~$\Net$.

\begin{defn}
The \emph{shift operator} $\Shft: \sset{1, \dotsc, Q}^\N\to \sset{1, \dotsc, Q}^\N$ acts as a one-sided shift on sequences, that is,
\[\Shft(q_1q_2q_3\ldots) := q_2q_3q_4\ldots.\]
\end{defn}

While we talk about sequences on a network for the remainder of the paper, note that this is not necessarily the most efficient coding. For a general network, an optimal coding should take into account what happens at \emph{distribution nodes}\footnote{While \emph{distribution node} is used in~\cite{Castro_2023}, such nodes are called \emph{splitting nodes} in~\cite{AshwinCastroLohse2020}.}, i.e., equilibria that have more than one outgoing heteroclinic connection in the network~$\Net$. We illustrate this with an example.

\begin{ex}\label{ex:KS}
The Kirk--Silber network~\cite{VKirk_1994} is the union of two distinct cycles with one joint connection between four equilibria $\xi_1, \dotsc, \xi_4$; cf.~Figure~\ref{fig:ks}.
If $\HCi{p}{p'}$ denotes the heteroclinic connection from~$\xi_p$ to~$\xi_{p'}$, then the first cycle has connections $\gamma_1 = \HCi{1}{2}$, $\gamma_2=\HCi{2}{3}$, $\gamma_3=\HCi{3}{1}$ and the second cycle has $\gamma_1 = \HCi{1}{2}, \gamma_4=\HCi{2}{4}$, $\gamma_5=\HCi{4}{1}$. The cycles correspond to the periodic sequences $123123\dotsb$ and $145145\dotsb$, respectively.

The only relevant information is what happens at the {distribution node~$\xi_2$}:
All sequences~$\qb$ that start with the joint connection~$\gamma_1$ are concatenations of the symbols $\mathrm{A} := 123$, $\mathrm{B} := 145$, i.e., they can be written compactly as sequences on the symbols $\sset{\mathrm{A},\mathrm{B}}$.
(For sequences of equilibria~$\pb$, the symbols are $\mathrm{A} := 123$, $\mathrm{B} := 124$.)
The shift~$\Shft$ induces a shift operator $\widetilde\Shft:\sset{\mathrm{A},\mathrm{B}}^\N\to\sset{\mathrm{A},\mathrm{B}}^\N$ that corresponds to a full transition around one of the two cycles.

While there are only two heteroclinic cycles in this network, there are infinitely many omnicycles. These correspond to periodic sequences such as $\qb=123145123145\dotsb$, which consists of cycle~$\mathrm{A}$ once followed by~$\mathrm{B}$ once.
\end{ex}

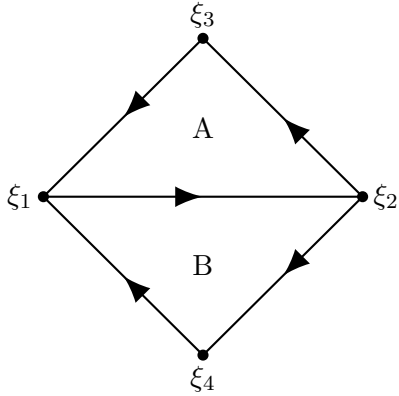
\begin{figure}
\centering
	\begin{tikzpicture}[>=latex,thick,scale=0.7]
	\coordinate (1) at (-3,0);
	\coordinate (2) at (3,0);
	\coordinate (3) at (0,3);
	\coordinate (4) at (0,-3);
	\filldraw[black] (1) circle (2.4pt) node[left] {$\xi_1$};
	\filldraw[black] (2) circle (2.4pt) node[right] {$\xi_2$};
	\filldraw[black] (3) circle (2.4pt) node[above] {$\xi_3$};
	\filldraw[black] (4) circle (2.4pt) node[below] {$\xi_4$};
	\path (1) edge[middlearrow={>}{left}{}] (2);
	\path (2) edge[middlearrow={>}{left}{}] (3);
	\path (3) edge[middlearrow={>}{left}{}] (1);
	\path (2) edge[middlearrow={>}{left}{}] (4);
	\path (4) edge[middlearrow={>}{left}{}] (1);
	\draw (1) to [edge node={node [above] {$\gamma_1$}}] (2);
	\draw (2) to [edge node={node [above right] {$\gamma_2$}}] (3);
	\draw (3) to [edge node={node [above left] {$\gamma_3$}}] (1);
	\draw (2) to [edge node={node [below right] {$\gamma_4$}}] (4);
	\draw (4) to [edge node={node [below left] {$\gamma_5$}}] (1);
	\node at (0,1.3) {$\mathrm{A}$};
	\node at (0,-1.3) {$\mathrm{B}$};
	\end{tikzpicture}
\caption{The Kirk--Silber network.}
\label{fig:ks}
\end{figure}

Finding efficient codings in this way relates to the concept of a skeleton map introduced in~\cite{PeixeRodrigues_2023}.

Finally, we remark that for every network~$\Net$ there are uncountably many sequences on~$\Net$: Even in the simplest case, where~$\Net$ consists of only two cycles (as in the Kirk--Silber network above), the set of itineraries on~$\Net$ corresponds to the set of sequences in two symbols, which is uncountable.

\subsection{Switching near a network}

Sequences may be used to encode the dynamics near a network~$\Net \subset \Rn$.
Let~$U_\Net$ be a neighborhood of the network and let~$U_p$ be a neighborhood of~$\xi_p$. 
For each connection~$\gamma_q$ in~$\Net$ consider a point $\zeta_q\in\gamma_q$ and let~$V_{q}$ be a neighborhood of~$\zeta_q$. 
Assume that all neighborhoods $U_p, V_{q}$ are disjoint.

\begin{defn}[See~\cite{Aguiar2005,Aguiar_2010}]
Consider neighborhoods as defined above. 
A trajectory~$x(t):=\phi_t(x)$ for a given $x \in \Rn$ \emph{follows the sequence $\qb = q_1q_2q_3\ldots$ within~$U_\Net$} if there exist two monotonically increasing sequences of non-negative times~$t_k$ and $t'_k$, $k\in\N$, such that for all $k\in\N$ we have $t_k<t'_k<t_{k+1}$ and 
\begin{enumerate}[(i)]
\item $x(t)\subset U_\Net$ for all $t>t_1$;
\item $x(t_k)\in U_{\alpha[\gamma_{q_k}]}$ and $x(t'_k)\in V_{q_k}$;
\item for all $t\in(t'_k, t'_{k+1})$ we have $x(t)\not\in U_{p'}$ for $\xi_{p'}\neq \omega[\gamma_{q_k}]$.
\end{enumerate}
\end{defn}

We can now identify initial conditions that stay near the network~$\Net$ by following a given sequence~$\qb$; cf.~Figure~\ref{fig:following} for an illustration. For $\delta>0$ let $B_\delta(\Net)$ be a $\delta$-neighborhood of~$\Net$ and let
\begin{align}
\mathcal{S}_\delta({\qb}) &= \set{x\in B_\delta(\Net)}{\phi_t(x) \text{ follows } \qb \text{ with } U_\Net = B_\delta(\Net)}
\end{align}
denote the set of initial conditions that follow the network along~$\qb$ in a $\delta$-neighborhood, or simply the \emph{$\delta$-stable set of the sequence~$\qb$}. We say that a trajectory in~$\mathcal{S}_\delta(\qb)$ \emph{realizes the sequence~$\qb$ in~$B_\delta(\Net)$}.

\begin{figure}
\includegraphics[width=\textwidth]{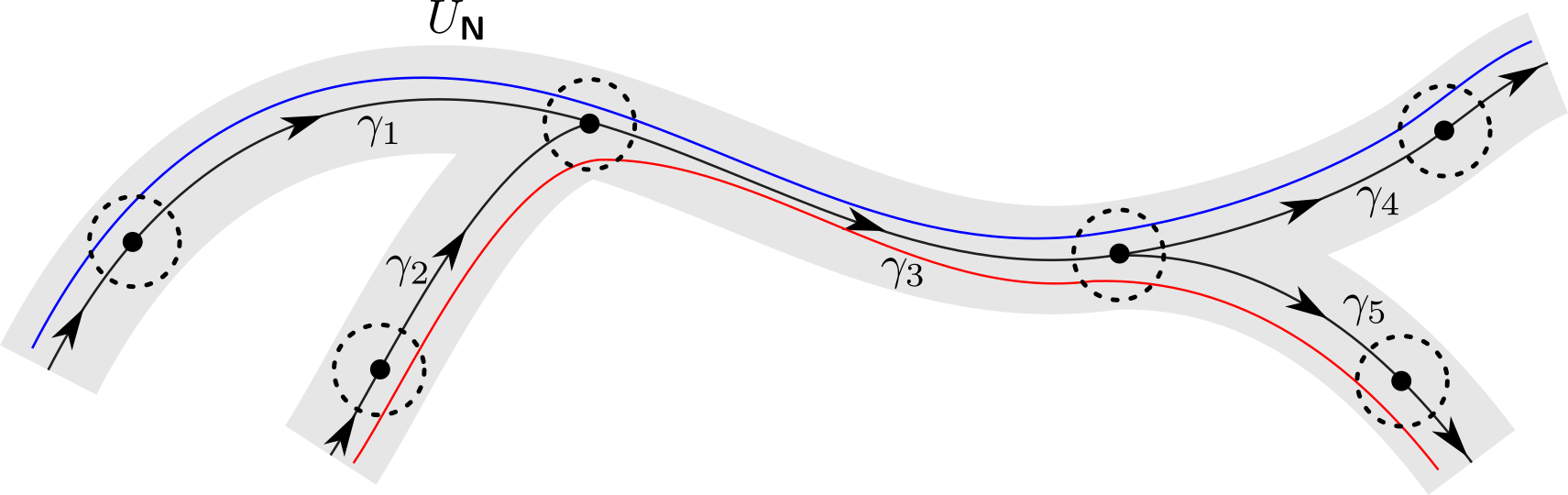}
\caption{Trajectories near a heteroclinic network: The blue trajectory follows the sequence~$134$ while the red trajectory follows~$235$ in the neighborhood $U_\Net$ (shaded) of the network~$\Net$. Neighborhoods of the equilibria in $\Net$ are shown as dashed circles.}
\label{fig:following}
\end{figure}

\begin{rem}\label{rem:maximal-sequence}
It follows from the definition above that $x \in \mathcal{S}_\delta(\qb)$ implies $x \in \mathcal{S}_\delta(\Shft\qb)$. This can be seen by applying a suitable shift to the sequences of times~$t_k, t'_k$. Hence, $\mathcal{S}_\delta(\qb) \subset \mathcal{S}_\delta(\Shft\qb)$. In this sense, the sequence that is realized by a given trajectory near the network is unique only up to shifting. In particular, since $t_k,t'_k \geq 0$ above, a \emph{maximal sequence} realized by a given trajectory can be obtained by checking near which connection this trajectory first enters~$B_\delta(\Net)$ permanently, i.e., it remains in~$B_\delta(\Net)$ for all future times.

Equivalenty, a single trajectory realizes a countable number of sequences on the network, all of which are related by the shift operator~$\Shft$. This number is finite if and only if the maximal sequence realized by this trajectory is preperiodic: 
By the argument above, $x \in \mathcal{S}_\delta(\qb)$ implies $x \in \mathcal{S}_\delta(\Shft^k\qb)$ for all $k \in \N$.
This means that if $x$~realizes the sequence~$\qb$, then $x$~realizes~$\Shft^k \qb$ for all $k \in \N$. 
If~$\qb$ is aperiodic, then $\Shft^k \qb \neq \Shft^j \qb$ for $k \neq j$ and $x$~realizes the infinitely many distinct sequences~$\Shft^k\qb$. 
If on the other hand~$\qb$ is preperiodic, then the set $\{ \Shft^k\qb \mid k \in \N \}$ contains only finitely many distinct sequences, realized by~$x$.
\end{rem}

\begin{defn}
There is \emph{(infinite) switching in positive time} near the network~$\Net$ if in any $\delta$-neighborhood of~$\Net$, every sequence~$\qb$ is realized by at least one trajectory, that is, for any $\delta>0$ and all~$\qb$ we have $\mathcal{S}_\delta({\qb})\neq\emptyset$.
\end{defn}

\begin{rem}
Note that switching in positive time is sometimes also called \emph{forward switching}. Analogously, one can consider \emph{backwards switching}, or even \emph{bi-infinite switching}. In this work we focus exclusively on forward switching and thus refer to it solely as \emph{switching}.

Notions for different levels of switching appear in the literature: Restricting to finite sequences in the above definition naturally leads to the concept of \emph{finite switching}. Considering only particular subsets of finite sequences then gives rise to the even weaker notions of \emph{switching along a node/connection/cycle}, see~\cite{Aguiar2005,CastroLohse2016}, for example.
\end{rem}

Infinite switching has been shown to exist in heteroclinic networks where the linearization of the vector field has non-real eigenvalues for at least one equilibrium in the cycle: 
In~\cite{Aguiar2005}, for instance, the authors study a network between two nodes on a three-dimensional manifold and, under some assumptions, prove the presence of horseshoes in any neighborhood of the network, which implies infinite switching.
Switching has also been considered in the context of homoclinic networks~\cite{doi:10.1080/14689361003769770}.
As a specific instance, \cite{Rodrigues2015a}~provides an example of a homoclinic network (again with non-real eigenvalues) which exhibits infinite switching without the presence of suspended horseshoes---the network itself is asymptotically stable in this case, and thus attracts all initial conditions in some neighborhood.

When restricting to equilibria with real eigenvalues, it was shown recently in~\cite{Castro_2023} that for a large class of networks infinite switching is impossible. 
Even finite switching is restricted:
All finite sequences of a given length~$k$ are realized in a network only up to a maximal bound $k^\mathrm{switch} \in \N$, i.e., if all sequences of length~$k$ are realized, then $k<k^\mathrm{switch}$.

\begin{ex}\label{ex:KirkSilberSwitching}
We illustrate this restriction for the Kirk--Silber network (see Figure~\ref{fig:ks} and Example~\ref{ex:KS}). Consider sequences of length three with~$\gamma_1$ in the middle entry; the four possibilities that are compatible with the network architecture are~$312$, $314$, $512$, $514$.
In their original paper~\cite{VKirk_1994}, Kirk and Silber already showed that for any given parameter configuration only three of these are realized. 
Thus, there is no switching near the Kirk--Silber network, not even along the common connection~$\gamma_1$. 
In the framework of~\cite{Castro_2023}, this corresponds to $k^\mathrm{switch}=3$, that is there is a sequence of length three on the network that is not realized (which one it is depends on the parameters). 
By contrast, any sequence of length $k \leq 2$ is always realized, which implies switching at every node in the network.
\end{ex}

\section{Asymptotic stability of sequences}
\label{sec:Stability}
\noindent
For a given sequence~$\qb$ on a network~$\Net \subset \Rn$, we are interested in how many initial conditions~$x$ lead to trajectories~$\phi_t(x)$ that not only stay close to~$\Net$ but rather approach~$\Net$ as $t\to\infty$. 
From now on, let~$\mu$ denote the Lebesgue measure on $\Rn$.

\subsection{Fragmentary asymptotic stability of sequences}

For a set $A\subset\Rn$ and $x\in\Rn$ write $\dd(A,x)=\inf_{y\in A}{\norm{x-y}}$ for the Euclidean distance between~$A$ and~$x$.
Let
\begin{align}
\mathcal{A}_\delta({\qb}) &:= \tset{x\in\mathcal{S}_\delta({\qb})}{\lim_{t\to\infty}\dd(\Net, \phi_t(x))=0}.
\end{align}
denote points attracted to~$\Net$ along~$\qb$ within a $\delta$-neighborhood. Because of~$\mathcal{S}_\delta(\qb) \subset \mathcal{S}_\delta(\Shft\qb)$ we also have the inclusion $\mathcal{A}_\delta(\qb)\subset\mathcal{A}_\delta(\Shft{\qb})$ and hence, more generally, $\mathcal{A}_\delta(\Shft^n{\qb})\subset\mathcal{A}_\delta(\Shft^m{\qb})$ for $m\geq n$.

Typically, stability is defined for sets which are (forward) invariant with respect to the flow~$\phi_t$.
Our set~$\mathcal{A}_\delta({\qb})$ is not forward invariant: 
The trajectory through a point $x\in\mathcal{A}_\delta({\qb})$ follows $\qb$ by definition. But the same is usually not true if we cut off a finite transient by considering $\phi_t(x)$ for $t>0$ sufficiently large, since $\phi_t(x)$ might be in an entirely different part of the network than the heteroclinic connection that is the first entry of $\qb$. Thus, $\phi_t(x) \not\in \mathcal{A}_\delta(\qb)$, which means $\mathcal{A}_\delta(\qb)$ is not invariant. Applying the shift $\Shft$, however, we can also remove the transient from $\qb$, which means for some $\kappa(t)\in \N$ we have $\phi_t(x) \in \mathcal{A}_\delta(\Shft^{\kappa(t)}\qb)$.

A concrete example for this can be seen in Figure~\ref{fig:following} for $\qb=134\dotsb$.
Take a point~$x$ on the blue trajectory in a $\delta$-neighborhood of~$\alpha[\gamma_1]$, which is in~$\mathcal{A}_\delta(134\dotsb)$. 
In forward time there is a $t>0$ such that $\phi_t(x)$ is in a $\delta$-neighborhood of $\omega[\gamma_1]=\alpha[\gamma_3]$.
This means that \[\phi_t(x)\in\mathcal{A}_\delta(34\dotsb) = \mathcal{A}_\delta(\Shft(134\dotsb)),\] but in general $\phi_t(x)\not\in\mathcal{A}_\delta(134\dotsb)$. In this case we have $\kappa(t)=1$.

This observation together with the desire for an invariant set that collects initial conditions with identical (asymptotic) behavior near the network prompts the next definition.

\begin{defn}
Let~$\qb$ be a sequence on a network~$\Net$ and~$\delta>0$. Define the \emph{asymptotic $\delta$-basin of attraction of~$\qb$} to be
\begin{align}
\mathcal{B}_\delta({\qb}) &:= \set{x\in B_\delta(\Net)}{\exists k \in \N \text{ such that } x \in \mathcal{A}_\delta(\Shft^k\qb)} = \bigcup_{k \in \N} \mathcal{A}_\delta(\Shft^k\qb).
\end{align}
\end{defn}

The asymptotic basin is forward invariant and contains all points that approach~$\Net$ and follow~$\qb$ eventually in the sense that we allow for arbitrary shifts of~$\qb$.
We now define fragmentary asymptotic stability in terms of the asymptotic basin of attraction of a given sequence.

\begin{defn}
Consider a sequence $\qb = q_1q_2q_3\ldots$ on a network~$\Net$. Then~$\qb$ is called \emph{fragmentarily asymptotically stable (f.a.s.)} if for any~$\delta>0$ we have \[\mu(\mathcal{B}_\delta(\qb))>0.\]
\end{defn}

This notion of stability can be refined by considering asymptotic properties along the sequence as follows: The asymptotic basin of attraction decomposes into disjoint sets
\begin{align*}
\mathcal{D}^{(k)}_\delta(\qb):= \mathcal{A}_\delta(\Shft^k\qb) \setminus \mathcal{A}_\delta(\Shft^{k-1}\qb).
\end{align*}
In other words, $\mathcal{B}_\delta(\qb)$~can be written as the disjoint union
\begin{align}
\mathcal{B}_\delta({\qb}) = \bigcup_{k \in \N} \mathcal{D}^{(k)}_\delta(\qb).
\end{align}
Note that $\mu(\mathcal{D}^{(k)}_\delta(\qb))\leq \mu(\mathcal{A}_\delta({\Shft^k\qb})) \leq \mu(\mathcal{B}_\delta({\qb})) \leq \mu(B_\delta(\Net))<\infty$ by inclusion. 
Thus, $\mu(\mathcal{D}^{(k)}_\delta(\qb)) \to 0$ for $k \to \infty$ and we can refine the notion of fragmentary asymptotic stability based on how fast the size of $\mathcal{D}^{(k)}_\delta(\qb)$ decays.

\begin{defn}
Consider an f.a.s.\ sequence $\qb = q_1q_2q_3\ldots$ on a network~$\Net$.
\begin{enumerate}
\item[(i)] The sequence~$\qb$ is called \emph{f.a.s.\ of finite type (f-f.a.s.)} if for any $\delta>0$ there exists a $k_0 \in \N$ such that for any $k>k_0$ we have \[\mu(\mathcal{D}^{(k)}_\delta(\qb))= 0.\]
\item[(ii)] The sequence~$\qb$ is called \emph{f.a.s.\ of infinite type (i-f.a.s.)} if for any $\delta>0$ and all $k \in \N$ we have \[\mu(\mathcal{D}^{(k)}_\delta(\qb))> 0.\]
\end{enumerate}
\end{defn}

\begin{rem}
Here we only define fragmentary asymptotic stability of a particular sequence. In the same spirit though, one may define essential asymptotic stability (e.a.s.) of a sequence~$\qb$, which corresponds to attracting a set that approaches full Lebesgue measure in~$B_\delta(\Net)$ as~$\delta \to 0$; for a general definition of e.a.s.\ for heteroclinic cycles see also~\cite{Melbourne1991}.
\end{rem}

\subsection{Stability of (pre)periodic sequences}

This notion of fragmentary asymptotic stability of sequences on a network~$\Net$ is compatible with the classical notion of fragmentary asymptotic stability from~\cite{Podvigina2012}.
Recall that cycles and omnicycles correspond to periodic sequences.
The following proposition is immediate from the definition and allows us to use the notion of f.a.s.\ for sequences and the corresponding invariant sets interchangeably.

\begin{prop}
If a sequence~$\qb$ is f.a.s., then~$\Gamma(\qb)$ is f.a.s.\ as an invariant set. Conversely, if a heteroclinic (omni)cycle is f.a.s.\ (as an invariant set), then the corresponding periodic heteroclinic sequence is f.a.s.
\end{prop}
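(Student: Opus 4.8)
The plan is to unwind both directions purely from the definitions, since the proposition asserts a compatibility statement rather than anything requiring new machinery. The key object connecting the two notions is the asymptotic basin~$\mathcal{B}_\delta(\qb)=\bigcup_{k}\mathcal{A}_\delta(\Shft^k\qb)$ on the one hand, and the classical basin of attraction of the invariant set~$\Gamma(\qb)\subset\Net$ on the other. The whole argument hinges on relating $\mathcal{A}_\delta(\qb)$, whose points approach~$\Net$ while realizing~$\qb$, to points that approach the \emph{smaller} invariant set~$\Gamma(\qb)$. Let me sketch each implication.

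For the forward direction, suppose~$\qb$ is f.a.s., so $\mu(\mathcal{B}_\delta(\qb))>0$ for every~$\delta>0$. I want to show~$\Gamma(\qb)$ is f.a.s.\ as an invariant set, i.e.\ that the set of points attracted to~$\Gamma(\qb)$ has positive measure in every neighborhood. The natural claim is that any point $x\in\mathcal{B}_\delta(\qb)$ actually satisfies $\dd(\Gamma(\qb),\phi_t(x))\to 0$, not merely $\dd(\Net,\phi_t(x))\to 0$. This is where a genuine argument is needed: a point in $\mathcal{A}_\delta(\Shft^k\qb)$ approaches~$\Net$ \emph{and} follows the tail~$\Shft^k\qb$, which only ever visits neighborhoods of equilibria and connections appearing in~$\qb$. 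Because the switching definition forces the trajectory, via condition~(iii), to stay away from neighborhoods~$U_{p'}$ of off-sequence equilibria, its $\omega$-limit set must be contained in~$\Gamma(\qb)$. Combined with $\dd(\Net,\phi_t(x))\to 0$, this yields $\dd(\Gamma(\qb),\phi_t(x))\to 0$, so~$x$ lies in the basin of~$\Gamma(\qb)$. Hence $\mathcal{B}_\delta(\qb)$ is contained in that basin, which therefore has positive measure for all~$\delta$, giving f.a.s.\ of~$\Gamma(\qb)$.

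For the converse, assume a heteroclinic (omni)cycle is f.a.s.\ as an invariant set, so $\Gamma(\qb)=\Cyc$ is (up to the circle/union-of-circles identification noted before Definition~2.5) the cycle itself, and its basin has positive measure in every neighborhood. I would pick~$\delta$ small enough that $B_\delta(\Cyc)$ isolates the cycle, and argue that any point attracted to~$\Cyc$ must, once it enters and remains in~$B_\delta(\Cyc)$, realize exactly the periodic sequence~$\qb$ corresponding to the cycle: since~$\Cyc$ is topologically a circle with a single outgoing connection at each node, a trajectory limiting on it has no choice but to thread the connections in cyclic order, so it follows some shift~$\Shft^k\qb$ and hence lies in~$\mathcal{A}_\delta(\Shft^k\qb)\subset\mathcal{B}_\delta(\qb)$. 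A positive-measure set of attracted points thus sits inside~$\mathcal{B}_\delta(\qb)$, establishing $\mu(\mathcal{B}_\delta(\qb))>0$ and so f.a.s.\ of the sequence.

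**The main obstacle** I anticipate is the forward direction's containment $\omega(x)\subset\Gamma(\qb)$: one must argue carefully that ``follows~$\qb$ and approaches~$\Net$'' genuinely forces the limit set into the sub-network~$\Gamma(\qb)$ rather than some larger part of~$\Net$. The switching condition~(iii) excludes neighborhoods of equilibria not of the form $\omega[\gamma_{q_k}]$, but one should confirm this excludes \emph{all} off-sequence equilibria (any~$\xi_p\notin\Gamma(\qb)$ is such a~$\xi_{p'}$) and that, together with the connections being determined by their endpoints near a hyperbolic saddle, no off-sequence connection can appear in the limit. For the honest version this is a short compactness-plus-hyperbolicity remark, but it is the one place where the result is not purely formal. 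The converse is comparatively routine given the circle topology of a cycle, which pins the itinerary uniquely up to shift.
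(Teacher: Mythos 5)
You have put your finger on the right pressure point, but the step you defer --- ``no off-sequence connection can appear in the limit'' --- is not a short remark; it is where your argument (and, read literally, the statement itself) can fail. Note that the paper offers no proof to compare against: it declares the proposition immediate from the definitions, and your attempt to actually fill in the details shows it is not. Condition~(iii) in the definition of following constrains visits to equilibrium neighborhoods $U_{p'}$ only; it says nothing about connections. Your deferred remark can be made to work for an off-sequence connection joining two \emph{distinct} equilibria of which at most one is the currently permitted node $\omega[\gamma_{q_k}]$: a trajectory shadowing such a connection is forced into a forbidden $U_{p'}$ inside a single interval $(t'_k,t'_{k+1})$, contradicting~(iii). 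But nothing excludes excursions along an off-sequence connection whose \emph{both} endpoints are permitted, the extreme case being a homoclinic loop attached to an in-sequence node. The paper's own two-homoclinic-loop example (Section~\ref{sec:Ex3}) makes this concrete: that network has a single equilibrium, so condition~(iii) is vacuous, and a trajectory winding around both loops $\mathrm{A}$ and $\mathrm{B}$ infinitely often (while repeatedly passing through $V_\mathrm{A}$, $V_\mathrm{B}$ and the equilibrium neighborhood, as typical trajectories do) follows \emph{every} sequence, in particular $\overline{\mathrm{A}}$. Since this network typically attracts a positive-measure set, while the single loop $\Gamma(\overline{\mathrm{A}})$ attracts only a measure-zero set in the product system (only initial conditions with vanishing second factor converge to it), the sequence $\overline{\mathrm{A}}$ is f.a.s.\ while $\Gamma(\overline{\mathrm{A}})$ is not. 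So your claimed inclusion $\omega(x)\subset\Gamma(\mathbf{q})$ is genuinely false in such settings, and hyperbolicity does not rescue it. A separate, smaller gap: even where $\omega(x)\subset\Gamma(\mathbf{q})$ holds, your last sentence conflates the global basin with Podvigina's $\epsilon$-\emph{local} basin --- points of $\mathcal{B}_\delta(\mathbf{q})$ stay $\delta$-close to $\Net$ but may initially be far from $\Gamma(\mathbf{q})$, so you still need to decompose by entry time and push forward by the time-$T$ flow maps (which preserve positivity of measure) to land inside the local basin.

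The converse direction also has two gaps. First, it is not true that every point attracted to $\Cyc$ realizes the periodic sequence: points on the stable manifolds $\Ws(\xi_l)$ lie in the local basin of $\Cyc$ but never visit the connection neighborhoods $V_q$ infinitely often, hence realize no infinite sequence at all; you need hyperbolicity (the $\xi_l$ are saddles, so these manifolds are null sets) to discard them and retain a positive-measure set of genuine followers. Second, your itinerary-pinning argument relies on each node having a unique outgoing connection \emph{inside} $\Gamma(\mathbf{q})$; that is true for a cycle but false for an omnicycle, whose associated invariant set is in general a sub-network containing distribution nodes, and convergence to that set does not determine the itinerary --- this is precisely the subtlety that makes omnicycle (trail) stability a nontrivial notion. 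So the ``(omni)'' part of the statement is not covered by your proof at all.
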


We can make a more general statement about the stability of (pre)periodic sequences. 
For any preperiodic sequence there exist $m, n \in \N$ such that $\Shft^{m}\qb = \Shft^{m+n}\qb$ and thus $\mathcal{D}^{m+n}_\delta(\qb) = \emptyset$.
This yields the following conclusion.

\begin{prop}
If a preperiodic sequence~$\qb$ is f.a.s., then~$\qb$ is f-f.a.s.
\end{prop}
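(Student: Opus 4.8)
The plan is to exploit the fact that, once the preperiodic tail sets in, all sufficiently late shifts of $\qb$ are literally the \emph{same} periodic sequence, and then to combine this with the monotonicity of the sets $\mathcal{A}_\delta(\Shft^k\qb)$ recorded just after their definition. Since $\qb$ is preperiodic with period $K$ and threshold $K_0$, every index $i>K_0$ satisfies $q_i=q_{i+K}$. The $j$-th symbol of $\Shft^k\qb$ is $q_{k+j}$, so for $k\geq K_0$ (and every $j\geq 1$) we have $k+j>K_0$ and hence $q_{k+j}=q_{k+K+j}$; this gives $\Shft^k\qb=\Shft^{k+K}\qb$ for all $k\geq K_0$. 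I would state this identification first, as it is the only place where preperiodicity enters.

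Next I would turn this identity of sequences into an identity of attracted sets. Because $\mathcal{A}_\delta$ depends only on the sequence, $\Shft^k\qb=\Shft^{k+K}\qb$ gives $\mathcal{A}_\delta(\Shft^k\qb)=\mathcal{A}_\delta(\Shft^{k+K}\qb)$ for all $k\geq K_0$ and every $\delta>0$. The excerpt already records the monotonicity $\mathcal{A}_\delta(\Shft^n\qb)\subset\mathcal{A}_\delta(\Shft^m\qb)$ for $m\geq n$, so the finite chain
\[
\mathcal{A}_\delta(\Shft^k\qb)\subset\mathcal{A}_\delta(\Shft^{k+1}\qb)\subset\cdots\subset\mathcal{A}_\delta(\Shft^{k+K}\qb)=\mathcal{A}_\delta(\Shft^k\qb)
\]
must collapse to equalities throughout. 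A sequence of sets that is simultaneously nondecreasing and $K$-periodic is constant, so $\mathcal{A}_\delta(\Shft^k\qb)=\mathcal{A}_\delta(\Shft^{K_0}\qb)$ for all $k\geq K_0$.

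Finally I would read off the conclusion. Constancy of $\mathcal{A}_\delta(\Shft^k\qb)$ for $k\geq K_0$ forces the increments $\mathcal{D}^{(k)}_\delta(\qb)=\mathcal{A}_\delta(\Shft^k\qb)\setminus\mathcal{A}_\delta(\Shft^{k-1}\qb)$ to be \emph{empty} for every $k>K_0$, so in particular $\mu(\mathcal{D}^{(k)}_\delta(\qb))=0$ there. Taking $k_0=K_0$ (the same threshold works for every $\delta$, since the argument never used $\delta$) and recalling that $\qb$ is assumed f.a.s., this is exactly the definition of f-f.a.s.

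There is essentially no analytic difficulty here; the only thing to get right is the bookkeeping that upgrades a single relation $\Shft^k\qb=\Shft^{k+K}\qb$ to constancy for \emph{all} large $k$ rather than emptiness of one increment. The hint preceding the statement produces just one empty $\mathcal{D}$, so the small but essential extra step is to note that the periodicity holds at every level $k\geq K_0$ (equivalently, apply the shift repeatedly) before invoking monotonicity to force constancy.
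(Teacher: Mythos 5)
Your proof is correct and takes essentially the same route as the paper: the paper's own (one-line) argument observes that preperiodicity gives $\Shft^{m}\qb = \Shft^{m+n}\qb$ and hence $\mathcal{D}^{(m+n)}_\delta(\qb) = \emptyset$, exactly the identity-plus-monotonicity mechanism you use. Your extra bookkeeping---noting that the shift identity holds at \emph{every} level $k \geq K_0$, so that all increments $\mathcal{D}^{(k)}_\delta(\qb)$ with $k > K_0$ are empty, with a threshold independent of $\delta$---just makes explicit what the paper leaves implicit.
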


\section{Aperiodic sequences}
\label{sec:Aperiodicity}

While periodic sequences correspond to cycles and omnicycles, the main question we address here now is how many aperiodic sequences may be f.a.s.

\begin{prop}\label{prop:CountablyMany}
Suppose~$\qb$ is aperiodic and f.a.s. Then there are countably infinitely many aperiodic sequences that are f.a.s.
\end{prop}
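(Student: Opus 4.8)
The plan is to exhibit the countable family explicitly as the set of shifts $\sset{\Shft^k\qb : k\in\N}$ of the given sequence. Since $\qb$ is assumed aperiodic and f.a.s., it suffices to verify three things for each $k\in\N$: that $\Shft^k\qb$ is again aperiodic, that the sequences $\Shft^k\qb$ are pairwise distinct (so the family is genuinely infinite, and countable since it is indexed by $\N$), and that each $\Shft^k\qb$ is f.a.s. All three follow from properties already recorded in the excerpt.

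For aperiodicity and distinctness I would argue by contradiction, exploiting that $\qb$ and each of its shifts share a common tail. If $\Shft^k\qb$ were preperiodic, say $(\Shft^k\qb)_{j+K}=(\Shft^k\qb)_j$ for all $j>K_0$, then $q_{m+K}=q_m$ for all $m>K_0+k$, making $\qb$ itself preperiodic and contradicting the hypothesis. The same idea yields distinctness: an equality $\Shft^a\qb=\Shft^b\qb$ with $a<b$ forces $q_{m+(b-a)}=q_m$ for all $m>a$, i.e.\ $\qb$ is preperiodic, again a contradiction. Hence $k\mapsto\Shft^k\qb$ is injective with aperiodic image.

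For f.a.s.\ I would use the monotonicity noted right after the definition of $\mathcal{A}_\delta$: the sets $\mathcal{A}_\delta(\Shft^k\qb)$ form an increasing chain in~$k$, so $\mathcal{B}_\delta(\qb)=\bigcup_{k\in\N}\mathcal{A}_\delta(\Shft^k\qb)$ is an increasing union. Because an increasing union is unchanged by discarding finitely many initial terms, we obtain for every~$j$ that
\[
\mathcal{B}_\delta(\Shft^j\qb)=\bigcup_{k\in\N}\mathcal{A}_\delta(\Shft^{k+j}\qb)=\bigcup_{k\in\N}\mathcal{A}_\delta(\Shft^{k}\qb)=\mathcal{B}_\delta(\qb).
\]
In particular $\mu(\mathcal{B}_\delta(\Shft^j\qb))=\mu(\mathcal{B}_\delta(\qb))>0$ for every $\delta>0$, so each $\Shft^j\qb$ is f.a.s. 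Combining the three verifications yields a countably infinite family of aperiodic f.a.s.\ sequences.

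The argument is essentially bookkeeping, so there is no serious obstacle; the only point that needs care is the measure-preservation step, where one must ensure that shifting does not shrink the asymptotic basin. The conceptual content is precisely that the asymptotic basin $\mathcal{B}_\delta$ is shift-invariant as a consequence of the nesting $\mathcal{A}_\delta(\qb)\subset\mathcal{A}_\delta(\Shft\qb)$, which is why every shift inherits f.a.s.\ from~$\qb$.
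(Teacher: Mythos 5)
Your proposal is correct and follows exactly the paper's own argument: the countable family is the set of shifts $\Shft^k\qb$, which are distinct and aperiodic precisely because $\qb$ is aperiodic, and each inherits f.a.s.\ from the nesting $\mathcal{A}_\delta(\Shft^n\qb)\subset\mathcal{A}_\delta(\Shft^m\qb)$ for $m\geq n$. The paper compresses all of this into two lines ("f.a.s.\ by inclusion of the respective sets" and "distinct since $\qb$ is aperiodic"), whereas you spell out the contradiction arguments and the shift-invariance $\mathcal{B}_\delta(\Shft^j\qb)=\mathcal{B}_\delta(\qb)$ explicitly; the substance is identical.
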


\begin{proof}
If~$\qb$ is f.a.s., then by definition $\mu(\mathcal{B}_\delta(\qb))>0$. Since $\mathcal{B}_\delta(\qb) \subset \mathcal{B}_\delta(\Shft \qb)$, we have $\mu(\mathcal{B}_\delta(\Shft\qb)) \geq \mu(\mathcal{B}_\delta(\qb))>0$. In the same way it follows that
\[\mu(\mathcal{B}_\delta(\Shft^k\qb)) \geq \mu(\mathcal{B}_\delta(\Shft^{k-1}\qb)) \geq \ldots \geq \mu(\mathcal{B}_\delta(\qb))>0.\]
Thus,  $\Shft^{k}\qb$ is f.a.s.\ for any $k \in \N$. Since~$\qb$ is aperiodic, there is a countably infinite set of sequences $\Shft^{k}\qb$ which are distinct and all f.a.s., proving the assertion.
\end{proof}

Before we make our next statement we need the following lemma on possible intersections of the $\delta$-stable sets of two different sequences.

\begin{lem}\label{lem:S-disjoint}
If two sequences $\qb_1$ and $\qb_2$ are not related by the shift $\Shft$, i.e., there is no $k \in \N$ such that $\Shft^k \qb_1 = \qb_2$ or $\Shft^k \qb_2 = \qb_1$, then
$$\mathcal{S}_\delta(\qb_1) \cap \mathcal{S}_\delta(\qb_2) = \emptyset$$
for $\delta>0$ sufficiently small.
\end{lem}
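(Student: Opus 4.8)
The plan is to argue by contradiction: I assume there is some $x \in \mathcal{S}_\delta(\qb_1) \cap \mathcal{S}_\delta(\qb_2)$ and show that $\qb_1$ and $\qb_2$ must then be related by the shift, contrary to hypothesis. Such an $x$ gives a single trajectory $\phi_t(x)$ that follows both sequences, with two families of witness times $(s_k, s_k')$ for $\qb_1$ and $(u_k, u_k')$ for $\qb_2$ as in the definition of ``follows''. The guiding idea is the one already indicated in Remark~\ref{rem:maximal-sequence}: a trajectory that is eventually trapped in $B_\delta(\Net)$ traverses the connections of $\Net$ in one intrinsic temporal order, so any sequence it realizes is a tail of this single itinerary and is therefore unique up to shifting.

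To make this precise I would first establish a geometric fact valid for sufficiently small $\delta$: the visits of $\phi_t(x)$ to the pairwise disjoint connection neighborhoods $\{V_q\}$ occur on disjoint time intervals that can be listed in temporal order, say $\phi_t(x)$ passes through $V_{c_1}, V_{c_2}, V_{c_3}, \dots$, and between two consecutive such visits it enters exactly one equilibrium neighborhood $U_p$. The key input is that each $\zeta_q$ is a regular (non-equilibrium) point of the flow, so after shrinking $\delta$ a flow box around $\zeta_q$ shows that any trajectory segment meeting $V_q$ while remaining in $B_\delta(\Net)$ shadows $\gamma_q$, entering $V_q$ from the source neighborhood $U_{\alpha[\gamma_q]}$ and leaving towards the target neighborhood $U_{\omega[\gamma_q]}$. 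Thus every passage through a connection neighborhood is a genuine traversal of that connection, and the symbol sequence $c_1 c_2 c_3 \dots$ depends only on the trajectory, not on which sequence we posit it to follow.

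The next step is to identify $\qb_1$ and $\qb_2$ with tails of $c_1 c_2 c_3 \dots$. By condition~(ii) the recorded times $s_k'$ are visit times to $V_{q^1_k}$, so $(q^1_k)_k$ is a subsequence of $(c_m)_m$; the point is that it is in fact a contiguous tail. Indeed, condition~(iii) forces that between $s_k'$ and $s_{k+1}'$ the trajectory meets only the neighborhood $U_{\omega[\gamma_{q^1_k}]}$, whereas the geometric fact shows any intervening visit to some $V_c$ would make the trajectory pass from $U_{\alpha[\gamma_c]}$ to $U_{\omega[\gamma_c]}$; both must then equal $U_{\omega[\gamma_{q^1_k}]}$, so $c$ would be a homoclinic loop at $\omega[\gamma_{q^1_k}]$. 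Ruling this out---automatic when $\Net$ is heteroclinic since then $\alpha[\gamma_c]\neq\omega[\gamma_c]$, and otherwise forcing such an excursion to be recorded as well---shows $s_{k+1}'$ is the very next $V$-visit after $s_k'$, so that $q^1_1 q^1_2 \dots$ coincides with the tail $c_a c_{a+1}\dots$, where $a$ indexes the first recorded visit. The identical argument applied to $\qb_2$ yields $q^2_1 q^2_2\dots = c_b c_{b+1}\dots$.

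Finally, writing $\qb_1 = \Shft^{a-1}\mathbf{c}$ and $\qb_2 = \Shft^{b-1}\mathbf{c}$ for the common itinerary $\mathbf{c} = c_1 c_2 \dots$, I obtain $\qb_2 = \Shft^{\,b-a}\qb_1$ when $b \geq a$ (and $\qb_1 = \Shft^{\,a-b}\qb_2$ otherwise), so $\qb_1$ and $\qb_2$ are shift-related, which is the desired contradiction. I expect the main obstacle to be the geometric fact of the second paragraph---quantifying precisely how small $\delta$ must be so that staying in $B_\delta(\Net)$ forces a trajectory meeting $V_q$ to shadow $\gamma_q$ all the way between its two endpoint neighborhoods---together with the care needed in the third paragraph to exclude skipped homoclinic excursions in the non-heteroclinic case.
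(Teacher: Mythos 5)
Your proposal follows essentially the same route as the paper: the paper's entire proof is an invocation of Remark~\ref{rem:maximal-sequence} (every sequence realized by a given trajectory is a shift of one maximal itinerary), and what you do is supply a detailed proof of precisely that assertion --- ordering the visits to the connection neighborhoods into an intrinsic itinerary $\mathbf{c}$, and using condition~(iii) together with a flow-box/shadowing argument to show that any realized sequence is a contiguous tail of $\mathbf{c}$. For networks without homoclinic loops this is correct, and it is in fact more complete than what is written in the paper, since the content of Remark~\ref{rem:maximal-sequence} is never proved there.

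However, the case you flag in passing --- a homoclinic loop at $\omega[\gamma_{q_k}]$ --- is a genuine gap, and your proposed repair does not close it. You suggest ``forcing such an excursion to be recorded as well'', but nothing in the definition of following a sequence forces this: condition~(iii) restricts visits to the equilibrium neighborhoods $U_{p'}$ only, not to the connection neighborhoods $V_c$, and an excursion along a homoclinic loop $\gamma_c$ with $\alpha[\gamma_c]=\omega[\gamma_c]=\omega[\gamma_{q_k}]$ enters no equilibrium neighborhood other than $U_{\omega[\gamma_{q_k}]}$, hence is invisible to all three conditions. Concretely, in the network of Section~\ref{sec:Ex3} (two homoclinic loops $\mathrm{A}$, $\mathrm{B}$ at a single equilibrium), condition~(iii) is vacuous because there is only one equilibrium, so a trajectory in $B_\delta(\Net)$ that traverses both loops infinitely often --- the typical behavior for that attracting network --- follows \emph{every} sequence in $\sset{\mathrm{A},\mathrm{B}}^\N$; in particular it lies in $\mathcal{S}_\delta(\mathrm{A}\mathrm{A}\mathrm{A}\dotsb)\cap\mathcal{S}_\delta(\mathrm{B}\mathrm{B}\mathrm{B}\dotsb)$, although these two sequences are not shift-related, and no choice of small $\delta$ helps. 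So your argument is valid only when no node visited by the sequences carries a homoclinic loop (e.g.\ for purely heteroclinic networks). To be fair, the paper's own proof inherits exactly the same gap through Remark~\ref{rem:maximal-sequence}, whose uniqueness-up-to-shift claim fails in the same example; your write-up has the merit of making visible where the difficulty sits, but the parenthetical fix as stated is not an argument.
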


\begin{proof}
This follows from Remark~\ref{rem:maximal-sequence}: For a given $x\in\mathcal{S}_\delta(\qb_1)$ there exists a maximal sequence~$\qb$ such that $x$ follows $\qb$ in $B_\delta(\Net)$. In particular, there is a $k_1\in\N_0$ with $\qb_1=\Shft^{k_1}\qb$. For all other sequences~$\qb_2$ that are realized by this $x$, i.e., $x\in\mathcal{S}_\delta(\qb_2)$, there exists $k_2\in\N_0$ such that $\qb_2=\Shft^{k_2}\qb$. So if $x\in\mathcal{S}_\delta(\qb_1)\cap\mathcal{S}_\delta(\qb_2)$, then $\qb_1$ and $\qb_2$ are related by~$\Shft$.
\end{proof}

As a direct consequence the asymptotic basins of attraction of two sequences that are not related by $\Shft$ are also disjoint. 
With this we can state our next result that---through measure-theoretic arguments---limits the total number of f.a.s.\ sequences that may coexist in a given network.

\begin{prop}\label{prop:NotTooMany}
For any network~$\Net$ and any $\delta>0$, there are at most countably many sequences $\qb$, which are pairwise not related by the shift~$\Shft$ and satisfy $\mu(\mathcal{S}_\delta(\qb))>0$.
\end{prop}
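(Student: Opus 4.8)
The plan is to combine the disjointness provided by Lemma~\ref{lem:S-disjoint} with the elementary measure-theoretic fact that a finite measure space can carry only countably many pairwise disjoint sets of positive measure. The whole weight of the statement rests on disjointness: without it one could have uncountably many overlapping positive-measure sets inside a finite-measure region, so the shift-inequivalence hypothesis is exactly what is needed.

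First I would fix $\delta>0$ and consider an arbitrary collection $\{\qb_i\}_{i\in I}$ of sequences that are pairwise not related by $\Shft$ and satisfy $\mu(\mathcal{S}_\delta(\qb_i))>0$. By Lemma~\ref{lem:S-disjoint} the sets $\mathcal{S}_\delta(\qb_i)$ are pairwise disjoint, so $\{\mathcal{S}_\delta(\qb_i)\}_{i\in I}$ is a pairwise disjoint family of positive-measure subsets of $B_\delta(\Net)$. Since $\Net$ is compact (a finite union of equilibria and bounded connecting trajectories), $B_\delta(\Net)$ is bounded and hence $\mu(B_\delta(\Net))<\infty$.

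Next I would run the standard stratification argument. For each $n\in\N$ set $I_n = \tset{i\in I}{\mu(\mathcal{S}_\delta(\qb_i)) > 1/n}$. By disjointness and finite total measure, $\frac{1}{n}\,\abs{I_n} \le \sum_{i\in I_n}\mu(\mathcal{S}_\delta(\qb_i)) = \mu\big(\bigcup_{i\in I_n}\mathcal{S}_\delta(\qb_i)\big) \le \mu(B_\delta(\Net))$, so each $I_n$ is finite with $\abs{I_n}\le n\,\mu(B_\delta(\Net))$. As every $\qb_i$ has positive measure, we have $I = \bigcup_{n\in\N} I_n$, a countable union of finite sets, hence at most countable. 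This yields the claim.

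The main obstacle I anticipate is reconciling the quantifiers: the proposition asserts the bound for any $\delta>0$, whereas Lemma~\ref{lem:S-disjoint} guarantees disjointness only for sufficiently small $\delta$. I would address this by returning to the maximal-sequence argument behind the lemma, namely Remark~\ref{rem:maximal-sequence}: as soon as the defining neighborhoods $U_p$ and $V_q$ can be chosen pairwise disjoint---the standing assumption of this section---each point $x$ that permanently enters $B_\delta(\Net)$ realizes a well-defined maximal sequence $\qb$, and every sequence it realizes is a forward shift of $\qb$. Consequently no $x$ can lie in $\mathcal{S}_\delta(\qb_i)\cap\mathcal{S}_\delta(\qb_j)$ for shift-inequivalent $\qb_i,\qb_j$, so the disjointness---and with it the counting argument---persists for every admissible $\delta$ rather than only small ones. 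If one prefers to stay strictly within the lemma's hypotheses, the statement is to be read for $\delta$ below the threshold it supplies, which is precisely the regime relevant to stability.
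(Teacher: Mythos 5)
Your proof is correct and rests on the same two pillars as the paper's own argument: pairwise disjointness of the $\delta$-stable sets from Lemma~\ref{lem:S-disjoint}, followed by the standard fact that a pairwise disjoint family of positive-measure sets in a space of (sigma-)finite measure must be countable, proved by stratifying according to a measure threshold. The only substantive difference is how finiteness of the ambient measure is secured: you argue that $\Net$ is bounded (each connecting trajectory converges to an equilibrium in both time directions, hence is bounded), so $\mu(B_\delta(\Net))<\infty$ and a single stratification by $\mu(\mathcal{S}_\delta(\qb_i))>1/n$ suffices; the paper instead intersects with balls $B_r(0)$ and stratifies doubly over the radius $r$ and the threshold $1/k$, which amounts to exploiting sigma-finiteness of Lebesgue measure and never requires boundedness of $\Net$ (though the paper itself asserts $\mu(B_\delta(\Net))<\infty$ in Section~\ref{sec:Stability}, so your shortcut is consistent with its standing assumptions). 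A point in your favor: you explicitly confront the quantifier mismatch between the proposition (``any $\delta>0$'') and Lemma~\ref{lem:S-disjoint} (``$\delta$ sufficiently small''), resolving it by returning to the maximal-sequence argument of Remark~\ref{rem:maximal-sequence}, which applies whenever the neighborhoods $U_p$, $V_q$ are pairwise disjoint; the paper's proof invokes the lemma without commenting on this point.
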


\begin{proof}
Fix $\delta>0$ and a collection of sequences $\mathbf{Q}$ such that any two distinct elements of $\mathbf{Q}$ are not related by~$\Shft$. We consider the family of sets
\[\F:= \{\mathcal{S}_\delta(\qb) \mid \qb \in \mathbf{Q} \text{ and } \mu(\mathcal{S}_\delta(\qb))>0 \},\]
and show that it is countable, regardless of the choice of $\mathbf{Q}$.

Writing $B_r(0) \subset \Rn$ for a ball around the origin with radius $r>0$, we then set
\[\F_{r,k}:= \left\{ \mathcal{S}_\delta(\qb) \in \F \ \Big\vert \ \mu(\mathcal{S}_\delta(\qb) \cap B_r(0))> \frac{1}{k} \right\} \subset \F.\]
For any $\mathcal{S}_\delta(\qb) \in \F$ we can choose $r,k \in \N$ sufficiently large, so that $\mathcal{S}_\delta(\qb) \in \F_{r,k}$, hence
\[\bigcup\limits_{r,k=1}^\infty \F_{r,k} = \F.\]

Now fix $r,k \in \N$ and let $\mathcal{S}_\delta(\qb_1), \mathcal{S}_\delta(\qb_2), \dotsc,\mathcal{S}_\delta(\qb_m) \in \F_{r,k}$. By Lemma~\ref{lem:S-disjoint} these sets are pairwise disjoint. Then
\[\mu(B_r(0)) \geq \mu\left(\bigcup\limits_{j=1}^m (\mathcal{S}_\delta(\qb_j) \cap B_r(0))\right) = \sum_{j=1}^m \mu(\mathcal{S}_\delta(\qb_j) \cap B_r(0)) \geq \frac{m}{k},\]
and therefore $m \leq k \cdot \mu(B_r(0)) < \infty$. Thus, each $\F_{r,k}$ contains only finitely many sets, which means that~$\F$ is a countable union of finite sets, hence countable.

This applies to any choice of $\mathbf{Q}$, which proves the claim.
\end{proof}

We can directly deduce the following statement.

\begin{cor}\label{cor:countable}
For any network~$\Net$ there can be at most countably many f.a.s.\ sequences.
\end{cor}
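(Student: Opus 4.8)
The plan is to reduce the statement about f.a.s.\ sequences, which are defined through the asymptotic basins $\mathcal{B}_\delta(\qb)$, to the already-proven Proposition~\ref{prop:NotTooMany}, which only controls sequences satisfying $\mu(\mathcal{S}_\delta(\qb))>0$. First I would fix one value of $\delta>0$ once and for all. If $\qb$ is f.a.s., then $\mu(\mathcal{B}_\delta(\qb))>0$, and since $\mathcal{B}_\delta(\qb)=\bigcup_{k\in\N}\mathcal{A}_\delta(\Shft^k\qb)$ is a countable union with $\mathcal{A}_\delta(\Shft^k\qb)\subset\mathcal{S}_\delta(\Shft^k\qb)$, some shift $\pb:=\Shft^{k}\qb$ must satisfy $\mu(\mathcal{S}_\delta(\pb))>0$. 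Thus every f.a.s.\ sequence admits a \emph{witness} shift lying in the set $W:=\tset{\pb}{\mu(\mathcal{S}_\delta(\pb))>0}$, and it suffices to show that $W$ is countable and that each f.a.s.\ sequence lies in a countable orbit around its witness.

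To organise the bookkeeping I would not use the shift relation of Lemma~\ref{lem:S-disjoint} directly, since for one-sided sequences it fails to be transitive (two sequences may share a common tail without either being a shift of the other). Instead I would pass to the genuine equivalence relation of \emph{tail-equivalence}: $\qb_1\approx\qb_2$ if and only if $\Shft^{m}\qb_1=\Shft^{n}\qb_2$ for some $m,n\in\N$. Each tail-equivalence class is countable, because a sequence with a prescribed tail is determined by a finite prefix and there are only countably many finite prefixes. Moreover, being related by the shift implies being tail-equivalent, so a set of representatives consisting of one element from each tail-class meeting $W$ is automatically pairwise not related by the shift; applying Proposition~\ref{prop:NotTooMany} to these representatives (each of positive $\mathcal{S}_\delta$-measure) shows there are at most countably many such classes.

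Combining these observations, $W$ is contained in countably many tail-classes, each countable, hence $W$ is countable. Finally, every f.a.s.\ sequence is tail-equivalent to its witness in $W$, so the set of all f.a.s.\ sequences is contained in the union of the countably many tail-classes meeting $W$ and is therefore countable; Proposition~\ref{prop:CountablyMany} then shows this bound is sharp in the aperiodic case.

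The main obstacle I anticipate is precisely the transitivity issue above: the relation ``related by the shift $\Shft$'' appearing in Lemma~\ref{lem:S-disjoint} and Proposition~\ref{prop:NotTooMany} is symmetric but not an equivalence relation, so one cannot directly speak of ``shift-classes.'' The care needed is to introduce tail-equivalence as the correct coarser equivalence relation, to verify that shift-relatedness refines it (so that representatives of distinct tail-classes are genuinely shift-unrelated and Proposition~\ref{prop:NotTooMany} applies), and to confirm that each tail-class is countable. A secondary point to check is that fixing a single $\delta$ is legitimate: f.a.s.\ guarantees $\mu(\mathcal{B}_\delta(\qb))>0$ for every $\delta$, so one is free to work at a single convenient scale, at which the disjointness from Lemma~\ref{lem:S-disjoint} drives the counting in Proposition~\ref{prop:NotTooMany}.
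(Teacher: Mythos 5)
Your proposal is correct and takes essentially the same route as the paper: reduce to Proposition~\ref{prop:NotTooMany} by observing that any f.a.s.\ sequence has some shift $\bar{\qb}=\Shft^k\qb$ with $\mu(\mathcal{S}_\delta(\bar{\qb}))>0$, and then use the fact that shift-relatedness ties each sequence to only countably many others. Your tail-equivalence classes are simply a rigorous formalization of the paper's closing remark (``every sequence is related only to countably many others by the shift''), addressing the non-transitivity of shift-relatedness that the paper glosses over---a welcome precision, but not a different argument.
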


\begin{proof}
For a sequence $\qb$ to be f.a.s.\ it is necessary that $\mu(\mathcal{S}_\delta(\bar{\qb}))>0$ with $\bar{\qb}:=\Shft^k\qb$ for some $k\in\N$. So having uncountably many f.a.s.\ sequences implies that there are uncountably many sequences~$\bar{\qb}$ with~$\mu(\mathcal{S}_\delta(\bar{\qb}))>0$. This contradicts Proposition~\ref{prop:NotTooMany} because every sequence is related only to countably many others by the shift~$\Shft$.
\end{proof}

These results relate to the notion of \emph{abundant switching}~\cite{Rodrigues2023}.
There, the authors consider a system with infinite switching, but they also investigate the number of trajectories realizing a given sequence---instead of simply showing that there is \emph{at least one} trajectory for every sequence, which suffices for the usual notion of switching. 
Corollary~\ref{cor:countable} means that high levels of switching cannot be achieved by realizing every element in an uncountable collection of sequences through a set of positive measure.

\section{Examples}
\label{sec:Examples}
\noindent
In the following, we discuss whether or not to expect f.a.s.\ aperiodic heteroclinic sequences in concrete examples. 
The first two examples (Section~\ref{sec:Ex1} and~\ref{sec:Ex2}) relate to complexity that arises through potential itineraries a trajectory can take around a network. The third example (Section~\ref{sec:Ex3}) considers complexity of the dynamics in product systems of two homoclinics with different attraction speeds.
While the construction of a concrete example that exhibits an f.a.s.\ aperiodic heteroclinic sequence is beyond the scope of this paper, the systems we discuss give insights how such examples may be constructed.

\subsection{Geometric restrictions on the realized sequences} 
\label{sec:Ex1}

Near the Kirk--Silber network~\cite{VKirk_1994} aperiodic sequences are not realized because it is impossible for trajectories within a small neighborhood to go from one cycle to the other and back; cf.~Example~\ref{ex:KirkSilberSwitching}.
If the symbols~$\mathrm{A}$ and~$\mathrm{B}$ correspond to the constituent cycles (as in Example~\ref{ex:KS}), this means that $\mathrm{ABA}$ and $\mathrm{BAB}$ cannot appear in any realized sequence. 
In fact, results in~\cite{DissLohse} imply that the only sequences realized in the Kirk--Silber system are of the form $\mathrm{A}^k\mathrm{BBB}\dotsb$ with arbitrary $k \in \N$ (or with the roles of~$\mathrm{A}$ and $\mathrm{B}$ reversed, depending on the choice of parameters). This fully characterizes the set of sequences that are realized in the Kirk--Silber system. In particular, all realized sequences are preperiodic and thus no aperiodic sequence is realized (much less f.a.s.) here.

The reason for these restricted dynamics are geometric obstructions caused by the low-dimensional space ($\R^4$) containing the network. Similar limitations exist for networks containing a Kirk--Silber subnetwork \cite{AGUIAR20111475} or simply a common connection between two cycles satisfying an assumption on the global maps which retains the essence of the Kirk--Silber situation, see~\cite{CastroLohse2016}.

Podvigina~\cite{Podvigina2021} discusses a network in $\R^6$ where these restrictions do not apply. Its graph can be created from the Kirk--Silber network by adding two new nodes, thus replacing both $[\xi_3 \to \xi_1]$ and $[\xi_4 \to \xi_1]$ by $[\xi_3 \to \xi_3' \to \xi_1]$ and $[\xi_4 \to \xi_4' \to \xi_1]$, respectively\footnote{We use a different labeling for the nodes than Podvigina to highlight the similarity to the Kirk--Silber network.}; see Figure~\ref{fig:pod}.
By slight abuse of terminology we label the cycles $[\xi_1 \to \xi_2 \to \xi_3 \to \xi_3' \to \xi_1]$ and $[\xi_1 \to \xi_2 \to \xi_4 \to \xi_4' \to \xi_1]$ as~$\mathrm{A}$ and~$\mathrm{B}$, respectively. Podvigina shows that under suitable assumptions the periodic sequence (an {omnicycle}) defined through $\overline{\mathrm{A}\mathrm{B}}$ is f.a.s.\ (which she calls \emph{trail-stable}).

More generally, as pointed out in Section~\ref{sec:Sequences}, results in~\cite{Castro_2023} show that having only real eigenvalues prevents infinite switching and restricts the set of finite sequences that may be realized. 
However, this does not necessarily mean that no aperiodic sequences can be realized (and potentially be f.a.s.) in such systems. 
On the one hand, Postlethwaite and Dawes~\cite{Postlethwaite_2005} derived restrictions for possible periodic ways to approach a heteroclinic network with strong stability properties.
While they observe irregular cycling dynamics in the sense that all periodic sequences on the network they consider are unstable, the stability of individual aperiodic sequences remains unclear.
On the other hand, Podvigina  presents numerical evidence hinting at potentially stable aperiodic dynamics in~\cite{Podvigina2021}, but at the same time states that, numerically, aperiodic sequences are practically indistinguishable from long periodic ones.

\begin{figure}
\centering
	\begin{tikzpicture}[>=latex,thick,scale=0.8]
	\coordinate (1) at (-3,0);
	\coordinate (2) at (3,0);
	\coordinate (3) at (1.5,2);
	\coordinate (4) at (-1.5,2);
	\coordinate (5) at (1.5,-2);
	\coordinate (6) at (-1.5,-2);
	\filldraw[black] (1) circle (2.4pt) node[left] {$\xi_1$};
	\filldraw[black] (2) circle (2.4pt) node[right] {$\xi_2$};
	\filldraw[black] (3) circle (2.4pt) node[right] {$\xi_3$};
	\filldraw[black] (4) circle (2.4pt) node[left] {$\xi_3'$};
	\filldraw[black] (5) circle (2.4pt) node[right] {$\xi_4$};
	\filldraw[black] (6) circle (2.4pt) node[left] {$\xi_4'$};
	\path (1) edge[middlearrow={>}{left}{}] (2);
	\path (2) edge[middlearrow={>}{left}{}] (3);
	\path (3) edge[middlearrow={>}{left}{}] (4);
	\path (4) edge[middlearrow={>}{left}{}] (1);
	\path (2) edge[middlearrow={>}{left}{}] (5);
	\path (5) edge[middlearrow={>}{left}{}] (6);
	\path (6) edge[middlearrow={>}{left}{}] (1);
	\node at (0,1.0) {$\mathrm{A}$};
	\node at (0,-1.0) {$\mathrm{B}$};
	\end{tikzpicture}
\caption{Podvigina's network from~\cite{Podvigina2021} with an f.a.s.\ periodic sequence $\overline{\mathrm{A}\mathrm{B}}$.}
\label{fig:pod}
\end{figure}
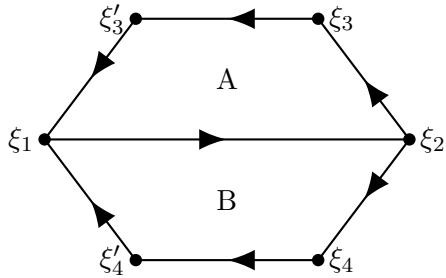

The authors of~\cite{Postlethwaite2022} offer similar results for a network in the context of the Rock-Paper-Scissors-Lizard-Spock game: They prove fragmentary asymptotic stability for a number of periodic sequences, but also encounter regions in parameter space where they cannot find stable periodic behavior. Their numerical results indicate stable aperiodic dynamics, but a proof of aperiodicity, in the sense that there is an aperiodic f.a.s.\ sequence, is lacking.

\subsection{Aperiodic sequences in systems with infinite switching} 
\label{sec:Ex2}

The examples discussed above have obstructions to the emergence of aperiodic sequences in general. However, even if switching is present, the stability of an individual (aperiodic) sequence needs to be clarified.
Infinite switching has been observed in a number of systems with at least one equilibrium where the linearization of the vector field has non-real eigenvalues. 
From this perspective, usually the number of initial conditions realizing a given sequence is of little concern. 
Since infinite switching often occurs on a Cantor set of measure zero, it does not necessarily exclude the possibility of stable aperiodic dynamics in a system.

However, the absence of f.a.s.\ aperiodic sequences is a reasonable conjecture in such systems.
Their analysis typically does not suggest that some sequences have qualitatively different stability properties than others.
At the same time, if all possible sequences on a network are realized, then Corollary~\ref{cor:countable} implies that they cannot all be f.a.s.\ simultaneously. 

This reasoning applies for example to the system studied in~\cite{Rodrigues2023}, where the authors consider a heteroclinic network consisting of two saddle-foci~$P_1$ and~$P_2$ with two connections from~$P_1$ to~$P_2$ and an arbitrarily large (but finite) number of connections from~$P_2$ to~$P_1$. They show that there is infinite switching near this network and every sequence is realized by an infinite number of trajectories. Since all of the connections from~$P_2$ to~$P_1$ have the same properties (as far as the proof for switching is concerned), it seems reasonable to conclude that all possible sequences display the same level of stability, thus suggesting that none of them is~f.a.s.

\subsection{Two homoclinic loops} 
\label{sec:Ex3}

In our third example, we discuss a different potential source for the emergence of f.a.s.\ aperiodic sequences in a product system of two stable homoclinic cycles with distinct convergence speeds.
Specifically, we study a toy example of two homoclinic loops to derive heuristic arguments for the presence or absence of f.a.s.\ aperiodic sequences in a certain class of systems.
The main insight is that if the equilibrium is hyperbolic, one should expect preperiodic rather than aperiodic sequences.
This indicates possible obstructions for the emergence of f.a.s.\ aperiodic sequences and suggests future work with a focus on two homoclinic loops from a nonhyperbolic equilibrium.

Consider a product system in $\R^2 \times \R^2$ with a hyperbolic saddle equilibrium at the origin and two attracting homoclinic trajectories (one in each factor of the system) converging to the origin in forward and backward time. 
This produces a homoclinic network~$\Net$ comprised of two cycles, say~$\mathrm{A}$ and~$\mathrm{B}$.
Typically, this network is attracting in such a product system (rather than the product of the cycles)~\cite{AshwinField2005}. 
This means trajectories approaching this attractor can be assigned a sequence in these two symbols corresponding to successive transitions along one cycle or the other.
In contrast to~\cite{ArgawalRodriguesField2011}, where the absence of finite switching was proved for such a system, we here focus on the possibility to obtain aperiodic sequences as points approach~$\Net$.

F.a.s.\ aperiodic dynamics are possible in such a system if the number of transitions along say $\mathrm{A}$ strictly increases between successive transitions along~$\mathrm{B}$.
Thus, our goal is to estimate an asymptotic relation between the number of turns it takes around each of the homoclinic cycles within a given time, depending on the rates of contraction for the two cycles.
This then provides a hint as to whether typical sequences encoding such trajectories are periodic or not.

We start by deriving a simplified return map for a system as outlined above, which we now write as
\begin{align}\label{sys-2hom}
(\dot x_j, \dot y_j)= f_j(x_j,y_j)
\end{align}
for~$j\in\{\mathrm{A}, \mathrm{B}\}$.
If two homoclinic cycles exist in the $(x_j,y_j)$-subspaces, we may assume the linearization of the vector field at the origin to be of the form $\diag(-c_A,e_A,-c_B,e_B)$ with $c_j, e_j >0$.
In particular, we assume that stability near the origin is governed by the linear terms.

Stability of each homoclinic cycle can be computed using the Poincar\'e return map for cycle $j\in\{\mathrm{A}, \mathrm{B}\}$.
First, consider the local transition through an $\varepsilon$-neighborhood in one of the two $(x_j,y_j)$-subspaces of system~\eqref{sys-2hom}. 
A standard calculation using the solution of the linearized system shows that with (sufficiently small) inital conditions $x_j(0)=\delta$ and $y_j(0)=\varepsilon$ we obtain the transition time $T=T(\delta)=\tilde{c}\ln(\delta)$, i.e., $y_j(T)=\varepsilon$, where $\tilde{c}<0$ is some constant. 
Thus, $x(T)=\hat{c}\delta^{\nu_j}$, where $\nu_j=\frac{c_j}{e_j}$, with another constant $\hat{c}>0$. 
This yields the local map $\delta \mapsto \hat{c}\delta^{\nu_j}$ for the transition through a neighborhood of the origin.
Second, one can use tubular flow arguments to derive a global map to capture the flow along the connection.
While the composition of the local and global maps yields the Poincar\'e return map, it is well-known that the asymptotic stability of the cycle only depends on the local properties:
The cycle is asymptotically stable if $c_j>e_j$ or, equivalently, if $\nu_j>1$. 
In particular, as trajectories approach the homoclinic cycle, more and more time is spent within the $\varepsilon$-neighborhood of the origin.

{\allowdisplaybreaks
To approximate the time it takes a nearby trajectory to complete a number of turns around one of the two cycles, we ignore the global map entirely, drop the constant~$\hat{c}$ and consider iterations of the map $\delta \mapsto \delta^{\nu_j}$. 
With this simplification, the approximate time~$T^n(\delta)$ for~$n$ turns around a cycle by a trajectory with initial condition~$\delta$ is
\begin{align*}
T^n(\delta)&=T(\delta) + T(\delta^{\nu_j}) + T(\delta^{\nu_j^2}) \ldots + T(\delta^{\nu_j^{n-1}})\\
&=\tilde{c}  \sum\limits_{k=0}^{n-1}\ln{\delta^{\nu_j^k}}
=\tilde{c}\ln{\delta}  \sum\limits_{k=0}^{n-1}\nu_j^k
=\tilde{c}\ln{\delta}  \left(\frac{1-\nu_j^n}{1-\nu_j}\right).
\end{align*}
Thus, given some $T>0$ we can solve the equation
\[T=\tilde{c}\ln{\delta}  \left(\frac{1-\nu_j^n}{1-\nu_j}\right)\]
for $n$ to get an estimate for the number of turns a trajectory with initial condition $x_j(0)=\delta$ makes around a cycle~$j\in\{\mathrm{A}, \mathrm{B}\}$ in a time interval of length~$T$.
}

This allows us to estimate the number of transitions along each cycle in a given time~$T$.
Ignoring the constants, we have
\[\frac{1-\nu_\mathrm{A}^n}{1-\nu_\mathrm{A}} = T = \frac{1-\nu_\mathrm{B}^m}{1-\nu_\mathrm{B}}\]
which we can solve for $n=n(m)$ or for $m=m(n)$ to find a relation between the numbers of turns around each cycle. We obtain
\[n=n(m)=\frac{1}{\ln{\nu_\mathrm{A}}}\ln\left( 1- \frac{1-\nu_\mathrm{A}}{1-\nu_\mathrm{B}} (1- \nu_\mathrm{B}^m) \right),\]
which implies that the relation we were looking for is (asymptotically) linear. 
Despite the simplifications, this indicates that sequences close to the homoclinic loops are predominantly preperiodic rather than aperiodic.

This suggests that---in the context of a system with two homoclinic loops---hyperbolicity of the equilibrium can be an obstruction to the emergence of f.a.s.\ aperiodic sequences.
At the same time, it suggests that f.a.s.\ aperiodic sequences may appear as a bifurcating case when the equilibrium is nonhyperbolic.
We discuss this further below.

\section{Discussion}
\label{sec:discussion}
\noindent
The main take-home messages from this note are that (i)~if there is an f.a.s.\ aperiodic heteroclinic sequence, then there are (countably) infinitely many and (ii)~there cannot be too many (in particular uncountably many) f.a.s.\ aperiodic heteroclinic sequences.
We conclude by discussing some open questions, whose answers are beyond the scope of this note.

First, are all f-f.a.s.\ sequences preperiodic? The stability of preperiodic heteroclinic sequences can be determined by looking at finite products of transition matrices as shown in~\cite{Podvigina2021}. 
The converse would fully classify the set of f-f.a.s.\ sequences and simplify the stability calculations to determining products of matrices.
More generally, is it possible to obtain similar conditions for some infinite product of transition matrices and/or the iterated function system they represent? Since for any preperiodic sequence~$\qb$ we not only have $\mu(\mathcal{D}^{(k)}_\delta(\qb)) =0$, but even $\mathcal{D}^{(k)}_\delta(\qb) = \emptyset$ it is in this context also worth asking whether there exist sequences $\qb$ such that $\mathcal{D}^{(k)}_\delta(\qb)$ is a non-empty set of measure zero. This would constitute yet another intermediate level of stability.

Second, a concrete example of a system with an f.a.s.\ aperiodic sequence is still outstanding.
While systems that support switching show aperiodic dynamics, we do not expect many (if any) aperiodic sequences to be f.a.s.; cf.~Sections~\ref{sec:Ex1} and~\ref{sec:Ex2}.
Potentially the most interesting route would be to consider a product system as discussed in Section~\ref{sec:Ex3} where the two cycles have qualitatively different attraction rates.
More specifically, attraction to~$\mathrm{A}$ may be exponential in the corresponding invariant subspace and governed by the linearization at the equilibrium while attraction to~$\mathrm{B}$ is subexponential (for example, if the equilibrium was nonhyperbolic).
In this case, one may expect that a typical trajectory converges to an aperiodic sequence of the type~$\mathrm{A}\mathrm{B}^{j_1}\mathrm{A}\mathrm{B}^{j_2}\mathrm{A}\dotsb$ with $j_1<j_2<j_3<\dotsb$.

Third, what happens under time-reversal? 
Asymptotic stability relates to convergence as time increases. 
Thus, it is natural to ask what the asymptotic dynamics are as time is reversed and whether it relates to the network itself.
A trajectory through $x\in \Wu(\xi_p)\cap \mathcal{A}_\delta(\qb)$ approaches~$\Net$ in forward time (along~$\qb$) and in backward time~(to $\xi_p$);
this relates to heteroclinic connections of higher depth; see, for example,~\cite{Ashwin1999,ChawanyaAshwin2010}.
More generally, given an aperiodic f.a.s.\ sequence~$\qb$, which (finite) sequences can we put in front of~$\qb$ such that the resulting sequence is also f.a.s.?
Finally, can we have a bi-directional heteroclinic sequence in the sense that there are  trajectories converging to a network in forward and backward time, and if so, are they f.a.s.?

\bigbreak
\noindent\textbf{Acknowledgements.} We are grateful to P.~Ashwin, S.~Castro, C.~Postlethwaite, A.~Rodrigues, and A.~Rucklidge for helpful discussions on the content of this paper. We also thank an anonymous reviewer for their comments and questions that helped improve the exposition.


\bibliographystyle{unsrt}
\def\urlprefix{}
\def\url#1{}

\bibliography{ref} 

\end{document}